\newtheorem{theorem}{Theorem}[section]
\newtheorem{conjecture}[theorem]{Conjecture}
\newtheorem{remark}[theorem]{Remark}
\newtheorem{lemma}[theorem]{Lemma}
\theoremstyle{definition}
\newtheorem{example}[theorem]{Example}
\newtheorem{construction}[theorem]{Construction}
\begin{document}

\title{Counterexamples to two conjectures on mean color numbers of graphs}

\author{
Wushuang Zhai\thanks{School of Mathematics, Tianjin University, Tianjin, China: 18706628112@163.com.}\,
\qquad
Yan Yang\thanks{School of Mathematics and KL-AAGDM, Tianjin University, Tianjin, China: yanyang@tju.edu.cn.
Supported by National Natural Science Foundation of China under Grant 12371350. Corresponding author.}\,
}

\date{\today}

\maketitle

\begin{abstract}
The mean color number of an $n$-vertex graph $G$, denoted by $\mu(G)$, is the average number of colors used in all proper $n$-colorings of $G$.
For any graph $G$ and a vertex $w$ in $G$, Dong (2003) conjectured that if $H$ is a graph obtained from a graph $G$ by deleting all but one of the edges which are incident to $w$, then $\mu(G)\geq \mu(H)$;  and also conjectured that $\mu(G)\geq \mu((G-w)\cup K_1)$. We prove that there is an infinite family
of counterexamples to these two conjectures.

\medskip

\noindent {\bf Keywords:} chromatic polynomial; coloring; mean color number.

\smallskip
\noindent {\bf Mathematics Subject Classification (2020):} 05C15, 05C30, 05C31.

\end{abstract}

\section{Introduction}

Let $G$ be a simple graph, $V(G)$ and $E(G)$ be its vertex set and edge set respectively.  For any $u\in V(G)$, let $N_G(u)$ denote the set of neighbors of $u$ in $G$ and $d_G(u)$ denote the degree of $u$ in $G$, and $G-u$ denote the subgraph of $G$ obtained by removing $u$ together with the edges incident with it from $G$.
Denote by $G+v$ the graph $G$ with a new vertex $v$ added and with no edges connected to $v$. For $u,v\in V(G)$, we denote $G/uv$ the graph obtained from $G$ by identifying $u$ and $v$ and replacing multiedges by single ones.
If $uv\in E(G)$, we denote by $G-uv$ the graph obtained by deleting edge $uv$ from $G$.
The complete graph on $n$ vertices is denoted by $K_n.$ The\textit{ union} $G\cup H$ of two graph $G$ and $H$ is the graph with vertex set $V(G)\cup V(H)$,
and edge set  $E(G)\cup E(H).$ The \textit{join} $G\vee H$ of two vertex disjoint graphs $G$ and $H$ is obtained from $G\cup H$ by joining every vertex of $G$ to every vertex of $H$.

For a positive integer $\lambda$,  a {\it proper $\lambda$-coloring} of $G$ is a mapping $c: V(G)\rightarrow \{1,\ldots,\lambda\}$ such that
$c(u)\neq c(v)$ whenever $uv\in E(G)$. Two $\lambda$-colorings $c$ and $c'$ are distinct if $c(v)\neq c'(v)$ for some vertex $v$ of $G$. Let $P(G,\lambda)$ be the number of distinct proper $\lambda$-colorings of $G$, it is a polynomial in $\lambda$ and called {\it chromatic polynomial} of $G$. It was introduced by Birkhoff \cite{B1912} in 1912 with the hope of proving the Four Color Conjecture.

We denote $(\lambda)_{k}=\lambda(\lambda-1)\cdots(\lambda-k+1)$ the $k$th {\it falling factorial} of $\lambda$, and denote $\alpha(G,k)$ the number of partitions of
$V(G)$ into exactly $k$ nonempty independent sets, then $\alpha(G,k)(\lambda)_{k}$ is the number of $\lambda$-colorings of $G$ in which exactly $k$ colors are used.
It is well known that for the chromatic polynomial of an $n$-vertex graph $G$,
\begin{equation}P(G,\lambda)=\sum\limits_{k=1}^{n}\alpha(G,k)(\lambda)_{k}.\end{equation}
For any $n$-vertex graph $G$, there exist $n$-colorings of $G$, the {\it  mean color number} $\mu(G)$ of $G$, defined by Bartels and Welsh \cite{B1995},
is the average of number of colors used in all $n$-colorings of $G$. From the definition,
\begin{equation}\mu(G)=\frac{\sum\limits_{k=1}^{n}k\alpha(G,k)(n)_{k}}{\sum\limits_{k=1}^{n}\alpha(G,k)(n)_{k}}.\end{equation}

By applying $(1)$, Bartels and Welsh \cite{B1995} gave a formula to compute the mean color number of graphs.
\begin{theorem}[\cite{B1995}]\label{1} If $G$ is an $n$-vertex graph, then
$$\mu(G)=n\Big(1-\frac{P(G,n-1)}{P(G,n)}\Big).$$\end{theorem}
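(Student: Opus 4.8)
The plan is to work directly from formula (2) for $\mu(G)$ and reduce the claimed identity to a single elementary relation among falling factorials. First I would observe that setting $\lambda=n$ in (1) identifies the denominator of (2) as $\sum_{k=1}^{n}\alpha(G,k)(n)_k = P(G,n)$; likewise setting $\lambda=n-1$ gives $P(G,n-1)=\sum_{k=1}^{n}\alpha(G,k)(n-1)_k$. Thus it suffices to show that the numerator $\sum_{k=1}^{n}k\,\alpha(G,k)(n)_k$ equals $n\big(P(G,n)-P(G,n-1)\big) = n\sum_{k=1}^{n}\alpha(G,k)\big[(n)_k-(n-1)_k\big]$, since dividing by $P(G,n)$ then yields exactly the asserted formula.

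The heart of the matter is therefore the term-by-term identity $k\,(n)_k = n\big[(n)_k-(n-1)_k\big]$ for each $k$. To establish it I would factor out the common product $(n-1)(n-2)\cdots(n-k+1)$, which equals $(n-1)_{k-1}$, from both $(n)_k = n\,(n-1)_{k-1}$ and $(n-1)_k = (n-1)_{k-1}(n-k)$. This gives $(n)_k-(n-1)_k = (n-1)_{k-1}\big[n-(n-k)\big] = k\,(n-1)_{k-1}$, and multiplying by $n$ and using $n\,(n-1)_{k-1}=(n)_k$ produces $n\big[(n)_k-(n-1)_k\big]=k\,(n)_k$, as needed.

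Finally I would substitute this identity into the rewritten numerator: $n\sum_k \alpha(G,k)\big[(n)_k-(n-1)_k\big] = \sum_k k\,\alpha(G,k)(n)_k$, which matches the numerator of (2) exactly. Dividing through by $P(G,n)$ then gives $\mu(G) = n\big(1 - P(G,n-1)/P(G,n)\big)$. There is no substantive obstacle here: the only content is the falling-factorial identity, so the main thing to be careful about is the bookkeeping of the index range (the $k=n$ term contributes $(n-1)_n = 0$ automatically, and $P(G,n)\neq 0$ for an $n$-vertex graph, since assigning distinct colors to the $n$ vertices already gives at least $n!$ proper $n$-colorings, so the division is legitimate).
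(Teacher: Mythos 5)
Your proof is correct, and it is exactly the standard derivation indicated in the paper (which cites this result from Bartels and Welsh without reproducing a proof): identify the denominator of (2) as $P(G,n)$ via (1), and reduce the numerator to $n\big(P(G,n)-P(G,n-1)\big)$ using the falling-factorial identity $k\,(n)_k=n\big[(n)_k-(n-1)_k\big]$. The identity itself and the remarks about the $k=n$ term and $P(G,n)\geq n!>0$ all check out.
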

In  \cite{B1995}, Bartels and Welsh conjectured that if $H$ is a spanning subgraph of $G$, then $\mu(G)\geq \mu(H)$. But Mosca \cite{M1998} found counterexamples. Dong
\cite{D2003,D2005} proved that this conjecture holds under some conditions.  In \cite{B1995}, Bartels and Welsh also conjectured that for any $n$-vertex graph $G$,
$\mu(G)\geq \mu(O_n)$, where $O_n$ is the empty graph with $n$ vertices. Dong proved this conjecture in \cite{D2000}. Furthermore, Dong \cite{D2005} proved that for any
$n$-vertex graph $G$, $\mu(G)\geq \mu(Q)$, where $Q$ is any $2$-tree with $n$ vertices and $G$ is any graph whose vertex set has an ordering $x_1, x_2,\ldots, x_n$ such that
$x_i$ is contained in a $K_3$ of $G[V_i]$ for $i=3,4,\ldots,n$, where $V_i=\{x_1,x_2,\ldots, x_i\}.$

In 2003, Dong \cite{D2003} posed the following two conjectures.

\begin{conjecture}[\cite{D2003}]\label{c2} For any graphs $G$ and a vertex $w$ in $G$, $\mu(G)\geq \mu((G-w)\cup K_1)$.
\end{conjecture}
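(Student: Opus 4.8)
Since the abstract announces that Conjecture~\ref{c2} is false, ``the final statement'' should be read as the assertion that it \emph{fails} on an infinite family, so my goal is not a proof but the construction of infinitely many graphs $G$ with a vertex $w$ for which $\mu(G)<\mu((G-w)\cup K_1)$. The plan is to first turn the mean-color inequality into a statement about chromatic polynomials via Theorem~\ref{1}. Writing $F=G-w$ and $H=(G-w)\cup K_1$, both $G$ and $H$ have $n$ vertices, and since $w$ is isolated in $H$ we have $P(H,\lambda)=\lambda\,P(F,\lambda)$. Hence, after clearing denominators, $\mu(G)<\mu(H)$ is equivalent to the single polynomial inequality
\[
n\,P(G,n-1)\,P(F,n) > (n-1)\,P(G,n)\,P(F,n-1),
\]
which is the inequality I would try to arrange.

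Next I would make the left-hand side transparent. Setting $S=N_G(w)$ and extending each coloring of $F$ to $w$ gives $P(G,\lambda)=\sum_{c}(\lambda-|c(S)|)$, where the sum runs over proper $\lambda$-colorings $c$ of $F$ and $|c(S)|$ is the number of colors appearing on $S$. Dividing the displayed inequality through by $P(F,n-1)P(F,n)$ then reduces it to the clean criterion $n\,\bar s_{n-1}<(n-1)\,\bar s_n$, where $\bar s_\lambda$ is the average of $|c(S)|$ over a uniformly random proper $\lambda$-coloring of $F$. This form is instructive: the criterion is an equality when $w$ is isolated, and it fails---so the conjecture holds---when $S$ is a clique; moreover for most graphs the normalized quantity $\bar s_\lambda/\lambda$ decreases in $\lambda$, which is exactly the wrong direction. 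So I would search for an $F$ dense enough that the relevant values $\lambda=n-1,n$ sit just above its chromatic number, where proper colorings are scarce and these averages behave irregularly.

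Concretely, I would test the family $F=K_m-e$ (a complete graph with one edge $e$ deleted) with $w$ joined to the two endpoints of $e$, so that $n=m+1$. By deletion--contraction $P(F,\lambda)=(\lambda)_m+(\lambda)_{m-1}$, and splitting the colorings of $F$ according to whether the two endpoints of $e$ receive equal or distinct colors gives $P(G,\lambda)=(\lambda-1)(\lambda)_{m-1}+(\lambda-2)(\lambda)_m$. Substituting $\lambda=m$ and $\lambda=m+1$ into the displayed inequality and cancelling the common factor $m!\,(m+1)!$ collapses everything to the condition $m>9$, so that every $m\ge 10$ yields a counterexample and hence the desired infinite family. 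The hard part is not this computation but locating the family in the first place: the reduction shows the conjecture is true in all the easy regimes, and the criterion is genuinely tight (it fails for $m\le 9$), so asymptotic estimates are useless and one must work with exact chromatic polynomials in the near-complete regime. I would expect these same graphs, in which $w$ has degree two, to be the natural starting point for simultaneously refuting the companion ``delete all but one edge incident to $w$'' conjecture.
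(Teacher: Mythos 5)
Your construction is correct and follows essentially the same route as the paper: you reduce $\mu(G)<\mu((G-w)\cup K_1)$ to the sign of $P(G,n)P(H,n-1)-P(G,n-1)P(H,n)$ (the paper's $\tau$) and then do an exact chromatic-polynomial computation on a near-complete family, and your graphs (with $F=K_m-e$ and $w$ joined to the endpoints of $e$) are precisely the $t=0$, $s=2$, $j=1$, $i=m-2$ instance of the paper's Construction~\ref{51}, with your threshold $m>9$ agreeing exactly with the paper's condition $i>7$. The computation checks out; the only material difference is your intermediate reformulation via the average number of colors appearing on $N(w)$, which is a nice heuristic for locating the family but is not needed for, and does not change, the final verification.
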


\begin{conjecture}[\cite{D2003}]\label{c1} For any graph $G$ and a vertex $w$ in $G$ with $d(w)\geq 1$, if $H$ is a graph obtained from $G$ by deleting all  but one of the edges which are incident to $w$, then $\mu(G)\geq\mu(H)$.
\end{conjecture}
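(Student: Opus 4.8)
Since the abstract announces that Conjectures \ref{c2} and \ref{c1} are both false, the goal is not to prove the displayed inequality but to exhibit an infinite family of graphs for which it fails. My plan is to reduce each conjecture, via Theorem \ref{1}, to a comparison of two chromatic-polynomial ratios and then to engineer a graph in which that comparison reverses. Writing $\mu(G)=n\bigl(1-P(G,n-1)/P(G,n)\bigr)$, for any $n$-vertex graph $X$ we have $\mu(G)\ge\mu(X)$ iff $P(G,n-1)/P(G,n)\le P(X,n-1)/P(X,n)$; so a counterexample is precisely a situation with $P(G,n-1)P(X,n)>P(X,n-1)P(G,n)$, where $X=(G-w)\cup K_1$ for Conjecture \ref{c2} and $X=H$ for Conjecture \ref{c1}.

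The key simplification is to condition on the coloring of $F:=G-w$ and on the colors appearing on $S:=N_G(w)$. If a proper $\lambda$-coloring $c$ of $F$ uses $|c(S)|$ distinct colors on $S$, then $w$ has $\lambda-|c(S)|$ admissible colors, so
\[
P(G,\lambda)=P(F,\lambda)\bigl(\lambda-a(\lambda)\bigr),\qquad a(\lambda):=\frac{1}{P(F,\lambda)}\sum_{c}\,|c(S)|,
\]
where $a(\lambda)$ is the expected number of distinct colors on $S$ in a uniformly random proper $\lambda$-coloring of $F$. Moreover $P\bigl((G-w)\cup K_1,\lambda\bigr)=\lambda\,P(F,\lambda)$, and $P(H,\lambda)=(\lambda-1)P(F,\lambda)$ because in $H$ the vertex $w$ has a single neighbor. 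Substituting these into the ratio criterion and clearing denominators, Conjecture \ref{c2} fails exactly when $(n-1)a(n)>n\,a(n-1)$, and Conjecture \ref{c1} fails exactly when $(n-2)a(n)-(n-1)a(n-1)>-1$. A short computation shows that the first inequality forces the second whenever $a(n-1)<n-1$ (automatic when $d(w)<n-1$), so it suffices to construct one family with
\[
\frac{a(n)}{a(n-1)}>\frac{n}{n-1}.
\]

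This asks the average number of colors on $S$ to grow by more than the factor $n/(n-1)$ when a single extra color is made available --- a threshold effect requiring the $n-1$ colors to be tightly rationed. I would realize this with a \emph{color-budget} construction: let $G_r=(K_r\cup K_1)\vee O_s$, i.e.\ take a clique $K_r$ together with an isolated vertex $w=K_1$ and join both to an independent set $O_s$ of size $s=d(w)$. Then $F=K_r\vee O_s$, $P(F,\lambda)=(\lambda)_r(\lambda-r)^s$, and
\[
a(\lambda)=(\lambda-r)\left(1-\Bigl(\tfrac{\lambda-r-1}{\lambda-r}\Bigr)^{s}\right),
\]
so the clique consumes exactly $r$ colors and the neighborhood $S$ must be squeezed into the remaining $\lambda-r$. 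At $\lambda=n-1=r+s$ only $s$ colors remain for the $s$ vertices of $S$ (forcing collisions), while at $\lambda=n=r+s+1$ there are $s+1$ (relieving the pressure), and $a(n),a(n-1)$ depend only on $s$. Fixing $s=2$, the smallest value for which $a$ is non-constant, gives $a(n)=5/3$ and $a(n-1)=3/2$, and $(n-1)a(n)>n\,a(n-1)$ reduces to $r>7$. I would then conclude that every $G_r$ with $r\ge 8$ is a counterexample to both conjectures, producing the desired infinite family.

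The main obstacle is that the margin is razor-thin: the two mean color numbers differ only in a lower-order term (for $r=8,\,s=2$ one finds $\mu(G)\approx 9.786$ against $\mu\bigl((G-w)\cup K_1\bigr)\approx 9.788$), so the argument cannot be run asymptotically and must establish the strict inequality $(n-1)a(n)>n\,a(n-1)$ exactly. This rests on controlling the quantities $\bigl((\lambda-r-1)/(\lambda-r)\bigr)^{s}$ precisely. The role of the clique is essential: unconstrained neighborhoods, such as the star $K_{1,s}$ in which $w$ meets an independent set with no competing clique, provably satisfy both conjectures, so the budget-exhausting $K_r$ is exactly what tips the balance, and determining how large $r$ must be is the crux of the verification.
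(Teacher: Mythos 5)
The proposal is correct and follows essentially the same route as the paper: it first manufactures a counterexample to Conjecture \ref{c2} (the family $(K_r\cup K_1)\vee O_2$ with $r\ge 8$ is precisely the paper's Example with $j=1$, $t=0$, $s=2$, whose threshold $i>7$ matches) and then transfers it to Conjecture \ref{c1} by comparing $\lambda P(G-w,\lambda)$ with $(\lambda-1)P(G-w,\lambda)$, which is exactly the content of Lemma \ref{7} and Theorem \ref{8}. The probabilistic bookkeeping via the expected number of colors on $N(w)$ is a clean repackaging of the paper's deletion--contraction computation, not a different argument.
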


We note that for any graph $G$ and a vertex $w$ in $G$, if $d(w)=0$, then $(G-w)\cup K_1\cong G$, Conjecture \ref{c2} holds;
if $d(w)=1$, for the graph $H$ in Conjecture \ref{c1}, we have $H\cong G$, Conjecture \ref{c1} holds.
Dong \cite{D2003}, Long and Ren \cite{L2022} proved these two conjectures hold for some kinds of graphs.
We will give counterexamples to Conjecture \ref{c2} and Conjecture \ref{c1} when
$d(w)\geq 1$ and $d(w)\geq 2$ respectively.

\section{Counterexamples to Conjectures \ref{c2} and \ref{c1}}

For graphs $G_1$ and $G_2$, Dong \cite{D2003} defined \begin{equation}\tau(G_1,G_2,\lambda)=P(G_1,\lambda)P(G_2,\lambda-1)-P(G_1,\lambda-1)P(G_2,\lambda)\end{equation}
to compare $\mu(G_1)$ with $\mu(G_2)$. From Theorem \ref{1}, one can deduce the following result.

\begin{lemma}[\cite{D2003}]\label{3} For any $n$-vertex graphs $G_1$ and  $G_2$, the inequality $\mu(G_1)< \mu(G_2)$ is equivalent to $\tau(G_1, G_2,n)<0$.
\end{lemma}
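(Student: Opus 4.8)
The plan is to start from the closed-form expression for the mean color number in Theorem \ref{1} and reduce the comparison $\mu(G_1)<\mu(G_2)$, through a chain of reversible steps, to the sign condition $\tau(G_1,G_2,n)<0$. Since every step will be an equivalence, this simultaneously establishes both directions of the asserted biconditional.

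First I would substitute $\mu(G_i)=n\bigl(1-P(G_i,n-1)/P(G_i,n)\bigr)$ for $i=1,2$ into the inequality $\mu(G_1)<\mu(G_2)$. Because $n$ is a fixed positive integer, dividing by $n$ and simplifying (the additive constant cancels, and a sign change reverses the inequality) shows that $\mu(G_1)<\mu(G_2)$ holds exactly when
\[
\frac{P(G_1,n-1)}{P(G_1,n)}>\frac{P(G_2,n-1)}{P(G_2,n)}.
\]
Clearing denominators and rearranging then turns this into $P(G_1,n)P(G_2,n-1)-P(G_1,n-1)P(G_2,n)<0$, which is precisely $\tau(G_1,G_2,n)<0$ by the definition $(3)$.

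The only step requiring genuine care — and the main point to verify, rather than a deep obstacle — is that clearing denominators keeps the inequality pointing in the same direction, which needs $P(G_i,n)>0$ for each $i$. I would record this as a short preliminary observation: every $n$-vertex graph admits the proper colorings that assign $n$ distinct colors to its $n$ vertices, so $\alpha(G_i,n)=1$ and hence the term $\alpha(G_i,n)(n)_n=n!$ in $(1)$ forces $P(G_i,n)\geq n!>0$. This positivity does double duty: it makes each $\mu(G_i)$ well defined, since the denominator of $(2)$ equals $P(G_i,n)\neq 0$, and it guarantees that multiplying the displayed fractional inequality by $P(G_1,n)P(G_2,n)>0$ preserves its direction. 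With this in hand the computation is a direct, fully reversible chain, so no further argument is needed.
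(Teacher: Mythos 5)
Your proof is correct and follows exactly the route the paper indicates (the lemma is quoted from Dong with the remark that it follows from Theorem \ref{1}): substitute the formula $\mu(G_i)=n\bigl(1-P(G_i,n-1)/P(G_i,n)\bigr)$, cancel the common terms, and clear denominators using $P(G_i,n)\geq n!>0$. The positivity observation you single out is indeed the only point needing justification, and your argument for it is right.
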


For more than two graphs, Dong proved the following Lemma.
\begin{lemma}[\cite{D2003}]\label{6} For any $n$-vertex graphs $G_1$, $G_2$ and $G_3$, if $\tau(G_1,G_2,n)<0$ and $\tau(G_2,G_3,n)<0$, then $\tau(G_1,G_3,n)<0$.
\end{lemma}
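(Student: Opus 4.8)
The plan is to reduce the assertion to the transitivity of the usual order on $\mathbb{R}$. First I would record the only input that needs justification, namely that $P(G,n)>0$ for every $n$-vertex graph $G$: assigning pairwise distinct colors to the $n$ vertices always produces a proper $n$-coloring, so in fact $P(G,n)\ge n!>0$. This guarantees that the ratio $r(G):=P(G,n-1)/P(G,n)$ is well defined for each of $G_1,G_2,G_3$, and it is precisely the quantity through which the mean color number is expressed, since Theorem \ref{1} gives $\mu(G)=n\bigl(1-r(G)\bigr)$.

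Next I would translate each hypothesis into a statement about these ratios. Writing $a_i=P(G_i,n)>0$ and $b_i=P(G_i,n-1)$, the definition $(3)$ of $\tau$ reads $\tau(G_i,G_j,n)=a_ib_j-b_ia_j$. Dividing the inequality $\tau(G_1,G_2,n)<0$ through by $a_1a_2>0$ yields $r(G_2)<r(G_1)$, and likewise $\tau(G_2,G_3,n)<0$ gives $r(G_3)<r(G_2)$.

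Finally, chaining these two inequalities gives $r(G_3)<r(G_1)$; multiplying back by $a_1a_3>0$ returns $a_1b_3-b_1a_3<0$, that is, $\tau(G_1,G_3,n)<0$, as required. Equivalently, and more conceptually, one may simply invoke Lemma \ref{3}: the two hypotheses say $\mu(G_1)<\mu(G_2)$ and $\mu(G_2)<\mu(G_3)$, whence $\mu(G_1)<\mu(G_3)$ by transitivity of $<$ on the reals, and Lemma \ref{3} converts this back into $\tau(G_1,G_3,n)<0$.

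The argument is essentially immediate, so I do not expect a genuine obstacle; the only point that requires any care is the positivity $P(G_i,n)>0$, which is what legitimizes both the division by $a_ia_j$ and the application of Theorem \ref{1} (and hence Lemma \ref{3}). Once that is in place, the conclusion is nothing more than the transitivity of the order relation applied to the ratios $r(G_i)$.
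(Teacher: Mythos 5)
Your proof is correct. Note that the paper itself gives no proof of this lemma --- it is quoted from Dong (2003) as a known result --- so there is nothing to compare against; your argument (reduce $\tau(G_i,G_j,n)<0$ to $P(G_j,n-1)/P(G_j,n)<P(G_i,n-1)/P(G_i,n)$ after observing $P(G,n)\ge n!>0$, then apply transitivity of $<$) is exactly the standard one, and your alternative route through Lemma \ref{3} is equally valid.
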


Now we give the counterexamples to Conjecture \ref{c2}.
\begin{construction}\label{51} Let $G_0$ be a graph which contains a subgraph $K_k$ $(k\geq 2)$ and $|V(G_0)|=k+s-2$ $(s\geq 2)$. Let $G_1$ be a graph obtained from $G_0$ by adding two new vertices $u$ and $v$, joining $u$ to $i+t$ $(i\geq 1, t\geq 0)$ vertices in $K_k$,  joining $v$ to $j$ $(j\geq 1)$ vertices in $K_k$,  joining $u$ to $v$,  and satisfying $|N_{G_1}(u)\cup N_{G_1}(v)|=k+2$, $|N_{G_1}(u)\cap N_{G_1}(v)|=t$.
\end{construction}

\begin{theorem}\label{5} Let $G_0$ and $G_1$ be graphs as constructed in the Construction \ref{51}, and $G_2=(G_1-v)\cup K_1$. If $t<j$ and $i> \frac{j^{3}+2(s-t-1)j^2+(t^2+2t-2ts+s^2-s)j+s^2-s}{j-t}$, then $\mu(G_1)<\mu(G_2)$.
\end{theorem}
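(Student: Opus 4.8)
The plan is to apply Lemma \ref{3}: since both $G_1$ and $G_2$ have $n=k+s$ vertices, establishing $\mu(G_1)<\mu(G_2)$ is equivalent to showing $\tau(G_1,G_2,n)<0$. I would therefore compute the three relevant chromatic polynomials as multiples of $P(G_0,\lambda)$ and then evaluate the resulting expression for $\tau$. The first step is to unpack the neighbourhood conditions in Construction \ref{51}. Because $u$ and $v$ are adjacent only to vertices of $K_k$ (and to each other), the identity $|N_{G_1}(u)\cup N_{G_1}(v)|=k+2$ forces $i+j=k$; hence the neighbours of $u$ and the neighbours of $v$ inside $K_k$ together cover \emph{all} of $K_k$ and overlap in exactly $t$ vertices. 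This identity is what makes the later count work.

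Next I would compute the chromatic polynomials by conditioning on a proper $\lambda$-coloring of $G_0$, in which the $k$ vertices of $K_k$ always receive $k$ distinct colors. Since the $i+t$ clique-neighbours of $u$ carry $i+t$ distinct colors, adding $u$ contributes a factor $\lambda-i-t$, giving $P(G_1-v,\lambda)=P(G_0,\lambda)(\lambda-i-t)$, and since $G_2$ only adds an isolated vertex, $P(G_2,\lambda)=\lambda P(G_1-v,\lambda)$. For $G_1$ the key is an inclusion–exclusion count of the color pairs for $(u,v)$: there are $(\lambda-i-t)(\lambda-j)$ ways ignoring the edge $uv$, and the number of these with $c(u)=c(v)$ is $\lambda-k$, because a common color must avoid the colors on $N_{G_1}(u)\cap K_k$ together with those on $N_{G_1}(v)\cap K_k$, which is all of $K_k$ precisely because $i+j=k$. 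Thus $P(G_1,\lambda)=P(G_0,\lambda)\big[(\lambda-i-t)(\lambda-j)-(\lambda-k)\big]$. Writing $f(\lambda)=(\lambda-i-t)(\lambda-j)-(\lambda-k)$ and $g(\lambda)=\lambda(\lambda-i-t)$, these factorizations yield
\[
\tau(G_1,G_2,\lambda)=P(G_0,\lambda)\,P(G_0,\lambda-1)\big[f(\lambda)g(\lambda-1)-f(\lambda-1)g(\lambda)\big].
\]
As $G_0$ has $k+s-2$ vertices, both $P(G_0,n)$ and $P(G_0,n-1)$ are positive, so the sign of $\tau(G_1,G_2,n)$ equals the sign of the discrete Wronskian $D(n):=f(n)g(n-1)-f(n-1)g(n)$.

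The step I expect to carry the weight is simplifying this Wronskian. Setting $A=\lambda-i-t$ and expanding, the $A(A-1)$-terms collapse to $jA(A-1)$ and the $A$-linear terms collapse to $-kA$, leaving the compact form $D(\lambda)=jA(A-1)-kA+(\lambda-1)(\lambda-k)$. Substituting $\lambda=n=i+j+s$ (so $A=j+s-t$ and $\lambda-k=s$) and collecting the terms linear in $i$, the coefficient of $i$ is $t-j$, which is negative exactly because $t<j$; consequently $D(n)<0$ is equivalent to $i$ exceeding the constant term divided by $j-t$. Expanding that constant term in powers of $j$ reproduces the displayed numerator $j^3+2(s-t-1)j^2+(t^2+2t-2ts+s^2-s)j+s^2-s$, and Lemma \ref{3} then gives $\mu(G_1)<\mu(G_2)$. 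The genuine obstacle here is not conceptual but bookkeeping: ensuring the Wronskian cancellation is carried out correctly and that the final polynomial expansion matches the stated bound term by term; the one substantive modeling point is the pair-count for $P(G_1,\lambda)$, which hinges entirely on $i+j=k$.
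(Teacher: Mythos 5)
Your proof is correct and follows essentially the same route as the paper: both reduce the claim to the sign of $\tau(G_1,G_2,n)$ via Lemma \ref{3}, use the same factorizations $P(G_1,\lambda)=P(G_0,\lambda)\big[(\lambda-i-t)(\lambda-j)-(\lambda-k)\big]$ and $P(G_2,\lambda)=P(G_0,\lambda)\lambda(\lambda-i-t)$ (your inclusion--exclusion on the color pair $(u,v)$ is just deletion--contraction on the edge $uv$ in counting form), and extract the coefficient $t-j$ of $i$ from the same polynomial. The bookkeeping in your Wronskian simplification checks out and reproduces the paper's final expression exactly.
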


\begin{proof}  Let $k+s=n$. Both $G_1$ and $G_2$ are $n$-vertex graphs. From the construction of $G_1$, we have $N_{G_1}(u)\cup N_{G_1}(v)=V(K_k)\cup \{u,v\}$
and $i+j=k$. Firstly, we compute the chromatic polynomials $P(G_1,\lambda)$ and $P(G_2,\lambda)$.
In the graph $G_1-uv$, the degrees of $u$ and $v$ are $i+t$ and $j$ respectively,
so $$P(G_1-uv,\lambda)=P(G_0,\lambda)(\lambda-i-t)(\lambda-j).$$
Because $N_{G_1-uv}(u)\cup N_{G_1-uv}(v)=V(K_k)$, we have $$P(G_1/uv,\lambda)=P(G_0,\lambda)(\lambda-k).$$
By using the edge deletion-contraction formula, we have
\begin{eqnarray}P(G_1,\lambda)&=&P(G_1-uv,\lambda)-P(G_1/uv,\lambda)\nonumber\\
&=&P(G_0,\lambda)(\lambda-i-t)(\lambda-j)-P(G_0,\lambda)(\lambda-k).
\end{eqnarray}
One can also compute that \begin{equation}P(G_2,\lambda)=P(G_0,\lambda)\lambda(\lambda-i-t).\end{equation}
Combining $(3)$, $(4)$, $(5)$ and $i+j=k$, $k+s=n$,  we have \begin{eqnarray*}&&\tau(G_1,G_2,n)\\&=&P(G_1,n)P(G_2,n-1)-P(G_1,n-1)P(G_2,n)\\
&=&P(G_0,n)\big((n-i-t)(n-j)-(n-k)\big)P(G_0,n-1)(n-1)(n-1-i-t)\\
&&-P(G_0,n-1)\big((n-1-i-t)(n-1-j)-(n-1-k)\big)P(G_0,n)n(n-i-t)\\
&=&P(G_0,n)P(G_0,n-1)\big(j^3+2(s-t-1)j^2+(t^2+2t-2ts+s^2-s)j+s^2-s+(t-j)i\big).
\end{eqnarray*}
Because $G_0$ has $n-2$ vertices, both $P(G_0,n)$ and $P(G_0,n-1)$ are more than 0.
If $i> \frac{j^{3}+2(s-t-1)j^2+(t^2+2t-2ts+s^2-s)j+s^2-s}{j-t}$, then $j^3+2(s-t-1)j^2+(t^2+2t-2ts+s^2-s)j+s^2-s+(t-j)i<0$, $\tau(G_1,G_2,n)<0$. From Lemma \ref{3}, $\mu(G_1)<\mu(G_2).$
\end{proof}


\begin{lemma} \label{7} Let $G_0$ be a graph with $n-1$ vertices, $G=G_0+w$ 
and $H$ be the graph obtained from $G$ by joining $w$ to a vertex in $G_0$, then $\tau(G,H,n)<0$.
\end{lemma}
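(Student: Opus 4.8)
The plan is to compute both chromatic polynomials explicitly, since the relationship of $G$ and $H$ to the base graph $G_0$ is completely transparent, and then substitute directly into the definition of $\tau$.

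First I would use the fact that $w$ is an isolated vertex of $G=G_0+w$: it can receive any of the $\lambda$ colors independently of the coloring of $G_0$, so $P(G,\lambda)=\lambda\,P(G_0,\lambda)$. In $H$ the vertex $w$ is joined to exactly one vertex of $G_0$, hence $d_H(w)=1$ and $w$ may take any color distinct from that of its unique neighbor, giving $P(H,\lambda)=(\lambda-1)\,P(G_0,\lambda)$. The same expressions evaluated at $\lambda-1$ read $P(G,\lambda-1)=(\lambda-1)P(G_0,\lambda-1)$ and $P(H,\lambda-1)=(\lambda-2)P(G_0,\lambda-1)$.

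Next I would substitute these four values into $\tau(G,H,\lambda)=P(G,\lambda)P(H,\lambda-1)-P(G,\lambda-1)P(H,\lambda)$. Both products share the common factor $P(G_0,\lambda)P(G_0,\lambda-1)$, and pulling it out leaves the scalar $\lambda(\lambda-2)-(\lambda-1)^2$, which simplifies to $-1$. Thus one obtains the clean identity $\tau(G,H,\lambda)=-P(G_0,\lambda)P(G_0,\lambda-1)$, valid for every $\lambda$.

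Finally, setting $\lambda=n$ gives $\tau(G,H,n)=-P(G_0,n)P(G_0,n-1)$. Since $G_0$ has only $n-1$ vertices, its chromatic number satisfies $\chi(G_0)\le n-1$, so $P(G_0,n-1)>0$ and, a fortiori, $P(G_0,n)>0$; the product is therefore strictly positive and $\tau(G,H,n)<0$, as required. I expect no real obstacle in this argument: the entire computation hinges only on the degree of $w$ being $0$ in $G$ and $1$ in $H$, and the one point deserving a word of justification is the strict positivity of the two evaluations of $P(G_0,\cdot)$, which follows at once from the vertex count of $G_0$.
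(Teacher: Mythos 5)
Your proposal is correct and follows essentially the same route as the paper: compute $P(G,\lambda)=\lambda P(G_0,\lambda)$ and $P(H,\lambda)=(\lambda-1)P(G_0,\lambda)$, substitute into $\tau$, and simplify to $-P(G_0,n)P(G_0,n-1)<0$. Your explicit remark that $P(G_0,n-1)>0$ because $G_0$ has only $n-1$ vertices is a small justification the paper leaves implicit, but otherwise the two arguments coincide.
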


\begin{proof} It is easy to compute that $P(G,\lambda)=\lambda P(G_0,\lambda)$ and $P(H,\lambda)=(\lambda-1) P(G_0,\lambda)$. So we have
\begin{eqnarray*}\tau(G,H,n)&=&P(G,n)P(H,n-1)-P(G,n-1)P(H,n)\\
&=&nP(G_0,n)(n-2)P(G_0,n-1)-(n-1)P(G_0,n-1)(n-1)P(G_0,n)\\
&=&-P(G_0,n)P(G_0,n-1)\\
&<&0.
\end{eqnarray*}
\end{proof}



\begin{theorem} \label{8} 
A counterexample to Conjecture \ref{c2} is also a counterexample to Conjecture \ref{c1}.
\end{theorem}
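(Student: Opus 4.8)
The plan is to keep the same graph $G$ and the same vertex $w$, and to chain two strict inequalities through the transitivity statement of Lemma~\ref{6}. Suppose $(G,w)$ is a counterexample to Conjecture~\ref{c2}, so that $\mu(G)<\mu((G-w)\cup K_1)$. Write $n=|V(G)|$ and $G_0=G-w$, a graph on $n-1$ vertices, and let $H$ be the graph obtained from $G$ by deleting all but one of the edges incident to $w$; thus $H$ consists of $G_0$ together with $w$ joined to a single vertex of $G_0$. The goal is to deduce $\mu(G)<\mu(H)$, which contradicts Conjecture~\ref{c1} and exhibits $(G,w,H)$ as a counterexample to it.

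First I would record that any counterexample to Conjecture~\ref{c2} automatically satisfies $d_G(w)\geq 2$, which is exactly the degree regime in which Conjecture~\ref{c1} can fail. Indeed, if $d_G(w)=0$ then $(G-w)\cup K_1\cong G$ and equality holds, while if $d_G(w)=1$ then Lemma~\ref{7}, with the two graphs in the opposite roles, gives $\mu((G-w)\cup K_1)<\mu(G)$, the reverse of a violation. Hence in a genuine counterexample the passage from $G$ to $H$ deletes at least one edge, so $H\not\cong G$.

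The heart of the argument is to recognize $(G-w)\cup K_1$ and $H$ as precisely the two graphs of Lemma~\ref{7} built over the common base $G_0$: the former is $G_0$ with an isolated vertex added (the ``$G$'' of Lemma~\ref{7}), and the latter is $G_0$ with one new pendant edge attached (the ``$H$'' of Lemma~\ref{7}). This identification is valid no matter which neighbor of $w$ is retained, since $P(H,\lambda)=(\lambda-1)P(G_0,\lambda)$ as computed in the proof of Lemma~\ref{7} and is independent of that choice. Lemma~\ref{7} then yields $\tau((G-w)\cup K_1,H,n)<0$ at once. Combining this with $\tau(G,(G-w)\cup K_1,n)<0$, which follows from the assumed violation of Conjecture~\ref{c2} via Lemma~\ref{3}, and applying the transitivity of Lemma~\ref{6}, I obtain $\tau(G,H,n)<0$, hence $\mu(G)<\mu(H)$ by Lemma~\ref{3}.

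I do not expect a serious obstacle: the whole proof is a two-step transitivity chain, and the only points requiring care are purely organizational, namely matching $(G-w)\cup K_1$ and $H$ to the hypotheses of Lemma~\ref{7} over the base $G_0$, and confirming the degree bound $d_G(w)\geq 2$ so that $H$ is genuinely of the type addressed by Conjecture~\ref{c1}.
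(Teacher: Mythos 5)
Your proposal is correct and follows essentially the same route as the paper: convert the assumed violation of Conjecture~\ref{c2} into $\tau(G,(G-w)\cup K_1,n)<0$ via Lemma~\ref{3}, apply Lemma~\ref{7} over the base $G_0=G-w$ to get $\tau((G-w)\cup K_1,H,n)<0$, and chain the two with Lemma~\ref{6}. The extra observations you include (that $d_G(w)\geq 2$ in any genuine counterexample, and that the identification in Lemma~\ref{7} is independent of which neighbor of $w$ is retained) are correct refinements the paper leaves implicit.
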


\begin{proof} Let $G_1$ be a counterexample to Conjecture \ref{c2}, i.e., there exists a vertex $v\in V(G_1)$, and $G_2=(G_1-v)\cup K_1$, such that $\mu(G_1)<\mu(G_2)$. Suppose that $|V(G_1)|=n$, then  $\tau(G_1,G_2,n)<0$. Let $G_3$ be the graph obtained from $G_1$ by deleting all but one of the edges which are incident to $v$. From Lemma \ref{7}, we have $\tau(G_2,G_3,n)<0$.
From Lemma \ref{6}, $\tau(G_1,G_3,n)<0$ follows. So we have $\mu(G_1)<\mu(G_3)$, $G_1$ is also a counterexample to Conjecture \ref{c1}.
\end{proof}

\begin{example} For $G_1$ in Construction \ref{51}, when $t=0$ and $s=2$, $G_1\cong (K_i+v)\vee(K_{j}+u)$. Let $G_2=(G_1-v)\cup K_1$ and
$G_3$ be the graph obtained from $G_1$ by deleting all but one of the edges which are incident to $v$. From Theorems \ref{5} and \ref{8},
if $i,j\geq 1$ and $i>(j+1)^2+1+\frac{2}{j}$,  then $\mu(G_1)<\mu(G_2)$ and $\mu(G_1)<\mu(G_3)$.
\end{example}

\begin{remark} Let $G_1$ be the graph defined in Construction \ref{51}, and $k+s=n$. It is easy to check that
\begin{eqnarray*}&&\tau(G_1,G_1-uv,n)\\&=&P(G_1,n)P(G_1-uv,n-1)-P(G_1,n-1)P(G_1-uv,n)\\
&=&P(G_0,n)\big((n-i-t)(n-j)-(n-k)\big)P(G_0,n-1)(n-j-1)(n-1-i-t)\\
&&-P(G_0,n-1)\big((n-1-i-t)(n-1-j)-(n-1-k)\big)P(G_0,n)(n-j)(n-i-t)\\
&=&P(G_0,n)P(G_0,n-1)\big((n-j)(n-k-1)(n-i-t)-(n-j-1)(n-k)(n-i-t-1)\big)\\
&=&P(G_0,n)P(G_0,n-1)\big(i(t-j)+s(s-1)\big).
\end{eqnarray*}
If $i>\frac{s(s-1)}{j-t}$, then $i(t-j)+s(s-1)<0$, $\tau(G_1,G_1-uv,n)<0$. From Lemma \ref{3}, $\mu(G_1)<\mu(G_1-uv).$
Thus, the graph $G_1$ is a counterexample to  Bartels and Welsh's \cite{B1995} conjecture   which state that if $H$ is a spanning subgraph of $G$, then $\mu(G)\geq \mu(H)$.
The former counterexamples given by Mosca\cite{M1998} are graphs whose chromatic numbers are even or $n-2$ ($n$ is the number of vertices).
In our counterexamples, the chromatic number of $G_1$ can be any $k$ $(k\geq 4)$, which is a supplement to Mosca's counterexamples.
\end{remark}

\bibliography{bibfile}
\end{document}